\newcommand{\Or}{\mathrm{O}}
\newcommand{\SO}{\mathrm{SO}}
\newcommand{\Vol}{\mathrm{Vol}}
\newcommand{\C}{\mathbb{C}}
\renewcommand{\H}{\mathbb{H}}
\newcommand{\Q}{\mathbb{Q}}
\newcommand{\Z}{\mathbb{Z}}
\newcommand{\R}{\mathbb{R}}
\newcommand{\PSL}{\mathrm{PSL}}
\newcommand{\SL}{\mathrm{SL}}
\newcommand{\GL}{\mathrm{GL}}
\newtheorem{theorem}{Theorem}[section] % first theorem in section 1 will be 1.1
\newtheorem{theoremx}{Theorem}
\newtheorem{lemma}[theorem]{Lemma}
\newtheorem{corollary}[theorem]{Corollary}
\newtheorem{prop}[theorem]{Proposition}
\newtheorem{rem}[theorem]{Remark}
\newtheorem{ex}[theorem]{Example}
\title[Arithmetic hyperbolic rational homology $3$--spheres]{Infinitely many arithmetic hyperbolic rational homology $3$--spheres that bound geometrically}
\author{L. Ferrari}
\author{A. Kolpakov}
\author{A. W. Reid}
\address{\newline Institut de math\'ematiques
\newline Universit\'e de Neuch\^atel
\newline  Rue Emile--Argand 11
\newline 2000 Neuch\^atel, Switzerland}
\email{leonardocpferrari@gmail.com}
\email{kolpakov.alexander@gmail.com}
\address{\newline Department of Mathematics
\newline Rice University
\newline Houston, TX 77005, USA}
\address{\newline Max-Planck-Insititut f\"ur Mathematik
\newline Vivatsgasse 7 
\newline 53111 Bonn, Germany}
\email{alan.reid@rice.edu, areid@mpim-bonn.mpg.de}
\thanks{L.F. and A.K. were supported by the Swiss National Science Foundation, project no. PP00P2--202667}
\thanks{A.W.R. was supported by the National Science Foundation and the Max--Planck--Institut f\"ur Mathematik}
\begin{document}

\maketitle

\begin{abstract} In this paper we provide the first examples of arithmetic hyperbolic $3$--manifolds that are rational homology spheres and bound geometrically either compact or cusped hyperbolic $4$--manifolds.  
\end{abstract}

\section{Introduction}
\label{intro}
Bordism properties of closed manifolds have been a classical and important topic in topology. To mention but one result, Rokhlin showed that all closed orientable $3$--manifolds bound a compact $4$--manifold. 

In \cite{LR0} the question of  {\em bounding geometrically} was introduced: namely whether a connected closed orientable hyperbolic $n$--manifold $M$ could arise as the totally
geodesic boundary of a compact hyperbolic $(n+1)$--manifold $W$. One could weaken this to merely asking that $M$ bound a finite volume hyperbolic $(n+1)$--manifold with cusps. Another variation of this is to ask if a connected closed orientable {\em flat} $n$--manifold could arise as the cusp cross--section of a finite volume $1$--cusped hyperbolic $(n+1)$--manifold.

In \cite{KRS} another problem was considered: whether a given connected closed orientable hyperbolic $n$--manifold $M$ could {\em embed geodesically}, that is arise as an embedded totally geodesic codimension $1$ submanifold of a hyperbolic $(n+1)$--manifold $W$. 
Although there are obstructions to bounding in certain dimensions, it is now known that in every dimension $n \geq 2$ there are many examples of closed hyperbolic $n$--manifolds which bound geometrically. However, less is known in the case of flat $n$--manifolds.  We refer the reader to \cite{CK, KM, KMT, KRR, LRb, LR0, MZ} for details about constructions of examples, and for description of possible obstructions.  

In particular, in dimension $3$, although many examples of closed orientable hyperbolic $3$--manifolds are known to bound geometrically, all of them up to date have \textit{positive} first Betti number. By virtue of Poincar\'e duality, a $3$-manifold with $\beta_1 = 0$ is a {\em rational homology $3$--sphere}, i.e. $H_q(M,\Q)\cong H_q(\mathbb{S}^3,\Q)$ for all integers $q\geq 0$.

On the other hand, hyperbolic rational homology $3$--spheres 
are abundant: any hyperbolic $(p, q)$ Dehn filling on a hyperbolic knot complement in $\mathbb{S}^3$ will produce an example with integral first homology being $\mathbb{Z}/p\mathbb{Z}$. The smallest closed orientable hyperbolic $3$--manifold, known as the Fomenko--Matveev--Weeks manifold \cite{GMM}, is also an arithmetic rational homology sphere that can be obtained by $\{(5, 1), (5, 2)\}$ Dehn filling of the Whitehead link complement \cite{CFJR}.

Motivated by this, the third author \cite{Reid-lecture} asked whether there are \textit{arithmetic} hyperbolic rational homology $3$--spheres $M$ which bound geometrically. The main results of this paper answer this question. 

\begin{theoremx} \label{main-a}
There are infinitely many hyperbolic rational homology $3$--spheres $M_j$ which bound geometrically a compact hyperbolic $4$--manifold $W_j$. Moreover, there are infinitely many compact hyperbolic $4$--manifolds $W_j$ for which $M_j = \partial W_j$.
\end{theoremx}

\begin{theoremx}\label{main-b}
There are infinitely many hyperbolic rational homology $3$--spheres $X_j$ which bound geometrically a cusped hyperbolic $4$--manifold $Y_j$. Moreover, there are infinitely many cusped hyperbolic $4$--manifolds $Y_j$ for which $X_j=\partial Y_j$.
\end{theoremx}

A common property to both families of manifolds $M_j$ and $X_j$ is that they are all arithmetic of simplest type (see \S \ref{simplest} for details).  That the manifolds are arithmetic of simplest type allows us to use \cite{KRS} to embed these manifolds in closed or cusped arithmetic hyperbolic $4$--manifolds. A further common property of the manifolds $M_j$ and $X_j$ that will be crucial in arranging them to bound (see Lemma \ref{lemma-embed-bound}) is
that $M_j$ and $X_j$ all double cover other rational homology $3$--spheres. The manifolds $M_j$ are commensurable with the arithmetic rational homology $3$--spheres that were constructed in \cite{CD} and \cite{BE}, and the manifolds $X_j$ are commensurable with the group generated by reflections in the right--angled dodecahedron in $\H^3$. 

The key observation that is needed in the proof that $M_j$ and $X_j$ bound geometrically is Lemma \ref{lemma-embed-bound}, which together with Lemma \ref{build_bound}, essentially ``reduces'' our task to group theory. However, the resulting computations rely heavily on Magma \cite{Mag}.

Using more combinatorial and geometric methods via the theory of colourings (see \S \ref{colorQHS}) we can produce some ``sporadic'' examples of 
closed hyperbolic $3$--manifolds $X_j$ which are rational homology spheres and which bound geometrically. In contrast to the former construction, this argument can be made by the ``power of pure thought'' and in a ``computer--free'' way. This was
confirmed by computer as part of a tree-search that found all the possible classes of rational homology $3$--spheres that could be built using colourings of the dodecahedron of lowest rank which bound geometrically.

We end the Introduction by pointing out that both constructions given in the paper
can only produce rational homology $3$--spheres that bound geometrically {\em a non--orientable} hyperbolic $4$--manifold.  It remains open as to whether one can arrange the $4$--manifold to be orientable (as in the constructions of \cite{LR0} for example). This can be traced to Lemma \ref{lemma-embed-bound}, which in our setting cannot be applied to produce an orientable $4$--manifold for which the $3$--manifold bounds geometrically. 

By Lemma \ref{lemma:QHS double-covers}, a rational homology sphere of odd dimension cannot have an orientation--reversing, fixed point free involution, since then it would double--cover a closed non--orientable manifold with trivial reduced rational homology. Such a manifold cannot exist by an Euler characteristic argument: closed manifolds of odd dimension must have $\chi=0$, but a manifold with trivial reduced rational homology has $\chi=1$. In connection with this, we note that \textit{arithmetic} rational homology spheres do not exist in any even dimension $\geq 6$ \cite{Emery}. 

Note that the Fomenko--Matveev--Weeks manifold cannot bound a compact orientable hyperbolic $4$--manifold as the $\eta$--invariant of the former is not an integer \cite{LRb}. Integral $\eta$--invariant would imply vanishing Chern--Simons invariant \cite{MeRu}. The latter can be computed using SnapPy \cite{SnapPy}: for the Fomenko--Matveev--Weeks manifold we obtain $\approx 0.060043$.  

The smallest (arithmetic or not) hyperbolic rational homology $3$--sphere that bounds seems to be unknown as of the time of this writing. 

\medskip

\noindent{\bf Acknowledgements:}~{\normalfont The authors are grateful to the Vinberg Distinguished Lecture Series for the opportunity of fruitful scientific exchange. L.F. and A.K. thank Bruno Martelli for helpful comments on the initial version of the manuscript. A.W.R. thanks Eamonn O'Brien for many hours of Magma tutoring, and thanks the Max--Planck--Institut f\"ur Mathematik, Bonn, for its hospitality during the preparation of this work.}

\section{Arithmetic hyperbolic manifolds}
\label{arithmetic}
For the reader's convenience we recall some facts about arithmetic hyperbolic manifolds. One may find further details in \cite{MR}.

\subsection{Arithmetic hyperbolic $3$--manifolds}
\label{arith3}

Let $M = \H^3/\Gamma$ be a hyperbolic $3$--manifold of finite volume. Then $M$ is called \textit{arithmetic} if the group $\Gamma$ is commensurable with a group $\Gamma_{\mathcal{O}}^1$ as described below.

Let $k$ be a number field with one complex place, $B/k$ a quaternion algebra over $k$, $\mathcal{O}\subset B$ an order. Let $\mathcal{O}^1$ denote the elements of $\mathcal{O}$ of norm $1$, and let $\rho: B\rightarrow M(2,\C)$ be an embedding. Then the group $\Gamma_{\mathcal{O}}^1=\rm{P}\rho(\mathcal{O}^1) \subset \PSL(2,\C)$ is a Kleinian group of finite co-volume.

We say that $\Gamma$ as above is \textit{derived from a quaternion algebra} if $\Gamma < \Gamma_{\mathcal{O}}^1$. 

\subsection{Arithmetic manifolds of simplest type}
\label{simplest}

For the most part, this paper is focused on hyperbolic manifolds of dimension $3$. However, we will need to discuss certain $4$--dimensional hyperbolic
manifolds, namely arithmetic hyperbolic manifolds of {\em simplest type}, whose definition we recall below.

Let $\ell$ be a totally real number field of degree $d$ over $\Q$ equipped with a fixed embedding into
$\R$ which we refer to as the identity embedding. Let $R_\ell$ denote the ring of integers of $\ell$.  Let $V$ be an $(n+1)$--dimensional vector space over $\ell$ equipped with a non--degenerate quadratic form $\mathrm{f}$ defined over $\ell$ which has signature $(n, 1)$ at the identity embedding, and signature $(n+1, 0)$ at the remaining $d-1$ embeddings.

Given this, the quadratic form $\mathrm{f}$ is equivalent over $\R$ to the standard Lorentzian form $\mathrm{J}_n = x_0^2 + x_1^2 + \ldots + x_{n-1}^2 - x_n^2$, and for any non--identity Galois embedding $\sigma:\ell\rightarrow \R$, the quadratic form $\mathrm{f}^\sigma$ (obtained by applying $\sigma$ to each entry of $\mathrm{f}$) is equivalent over $\R$ to $x_0^2 + x_1^2 + \ldots + x_{n-1}^2 + x_n^2$. Such a quadratic form is called {\em admissible}.

Let $F$ be the symmetric matrix associated to $\mathrm{f}$, and let $\Or(\mathrm{f})$ and $\SO(\mathrm{f})$ denote
the linear algebraic groups defined over $k$ as
$\Or(\mathrm{f})=\{X\in\GL(n+1,\C) \,|\, X^tFX=F\} \text{ and } \SO(\mathrm{f})=\{X\in\SL(n+1,\C) \,|\, X^tFX=F\}$. For a subring $L\subset \C$, let the $L$--points of $\Or(\mathrm{f})$, resp.~$\SO(\mathrm{f})$, be denoted by $\Or(\mathrm{f}, L)$, resp. $\SO(\mathrm{f}, L)$.

Note that, given an admissible quadratic form $\mathrm{f}$ defined over $\ell$ of signature $(n,1)$, there exists $T\in \GL(n+1,\R)$ such that $T^{-1}\SO(\mathrm{f}, \R)T = \SO(n, 1)$.  Let ${\rm{Isom}}^+(\H^n)$ denote 
the full group of orientation--preserving isometries of $\H^n$. This can be identified with the group $\SO^+(\mathrm{J}_n, \R) = \SO^+(n,1)$, which is the subgroup of $\SO(n,1)$ preserving the upper half--sheet of the hyperboloid $\{v \in V \,|\, v^T \mathrm{J}_n v = -1\}$.

A subgroup $\Gamma < {\rm{Isom}}^+(\H^n)$ is called {\em arithmetic of simplest type} if $\Gamma$ is commensurable with the image in ${\rm{Isom}}^+(\H^n)$ of $\SO(\mathrm{f}, R_\ell)$ under the conjugation map described above. An arithmetic hyperbolic $n$--manifold $M = {\H}^n/\Gamma$ is called arithmetic of simplest type if $\Gamma$ is of simplest type.
 
The relevance of arithmetic manifolds of simplest type is the following result of \cite{KRS} (see \cite[Proposition 4.1]{KRS} and its proof together with \cite[Section 7]{KRS}). For convenience, in the notation established above, we will say that an orientable arithmetic hyperbolic
$n$--manifold of simplest type $M=\H^n/\Gamma$ is {\em $\ell$--located} if $\Gamma = T\Lambda T^{-1}$ and $\Lambda < \SO(\mathrm{f}, \ell)$.

\begin{theorem}
\label{thm:KRS}
Let $M={\H}^n/\Gamma$ be arithmetic of simplest type which is $\ell$--located. Then $M$ embeds in a hyperbolic $(n+1)$--manifold $N$. If $\ell=\Q$ and $n\geq 3$, then $N$ is a non--compact hyperbolic $(n+1)$--manifold. Moreover, infinitely many distinct commensurability classes of $N$ can be constructed.
\end{theorem}

\begin{rem}\label{rem1}
\normalfont
For $n$ even, all arithmetic hyperbolic $n$--manifolds are of simplest type \cite{VS}.
\end{rem}

\begin{rem}\label{rem2}
\normalfont
If $n=3$, then the class of arithmetic hyperbolic $3$--manifolds of simplest type can be described as precisely those that contain one (and hence infinitely many) totally geodesic surfaces \cite[Chapters 9, 10]{MR}.
In this case the quaternion algebra $B/k$ as in \S \ref{arith3} can be described as $B = A \otimes_\ell k$ where $\ell$ is a totally real number field with $[k:\ell]=2$, and $A$ is a quaternion algebra associated to an immersed totally geodesic surface (see \cite[Theorem 9.5.4]{MR}). The field $\ell$ is the field of definition of the admissible quadratic form $f$ in the description of $M$ as a manifold of simplest type.
\end{rem}

\begin{rem}\label{rem3}
\normalfont
If $M=\H^3/\Gamma$ contains a totally geodesic surface and $\Gamma$ is derived from a quaternion algebra, then it follows from \cite[Chapter 10.2]{MR} that $\Gamma$ is $\ell$--located (where $\ell$ is the maximal totally real subfield of the invariant trace-field of $\Gamma$), and hence satisfies the hypothesis of Theorem \ref{thm:KRS}.
\end{rem}

\section{A criterion for bounding}\label{criterion}

In this section we describe a general construction to arrange for hyperbolic rational $3$--spheres to bound geometrically.

\subsection{Geodesic embeddings and geometric boundaries}\label{sec:fpf-isometries}

We begin by describing a way of promoting geodesic embeddings to bounding geometrically.  

\begin{lemma}\label{lemma-embed-bound}
Let $M$ be an orientable hyperbolic $n$-manifold that has a fixed point free involution $\varphi \in \mathrm{Isom}(M)$. If $M$ embeds geodesically then it also bounds geometrically.
\end{lemma}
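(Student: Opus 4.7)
The plan is to build the bounding manifold by a cut-and-paste construction steered by $\varphi$, handling separately the cases where $M$ does or does not separate its ambient hyperbolic $(n+1)$-manifold $N$. Since both $M$ and $N$ are orientable, $M$ is two-sided in $N$ and cutting along $M$ is well-defined. If $M$ separates $N$, writing $N = N_1 \cup_M N_2$ immediately presents $M$ as the totally geodesic boundary of the closure of either side, with no use of $\varphi$ required.

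The interesting case is when $M$ does not separate $N$. First I would cut $N$ along $M$ to obtain a connected compact hyperbolic manifold $\tilde N$ with totally geodesic boundary $\partial \tilde N = M_1 \sqcup M_2$, two isometric copies of $M$. Then I would use $\varphi$ to fold $M_2$ onto itself: set $W := \tilde N / \sim$, where $x \sim \varphi(x)$ for every $x \in M_2$, while leaving $M_1$ alone. Fixed-point-freeness of $\varphi$ ensures that the points of $M_2$ are honestly identified in pairs, so that the result is a smooth manifold whose only remaining boundary is the untouched $M_1 \cong M$, with $M_2/\varphi$ becoming an internal totally geodesic hypersurface.

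The main step to check, and the one I expect to require the most care, is that $W$ inherits a genuine smooth hyperbolic metric and not merely a topological quotient. In Fermi coordinates, the hyperbolic metric near $M_2$ is the warped product $dt^2 + \cosh^2(t)\, g_{M_2}$ on a one-sided collar $M_2 \times [0, \epsilon)$, which extends smoothly and hyperbolically to the doubled collar $M_2 \times (-\epsilon, \epsilon)$. I would recognise a neighborhood of $M_2/\varphi$ in $W$ as the quotient of this doubled collar by the free isometric involution $(y, t) \mapsto (\varphi(y), -t)$: near each pair $\{x, \varphi(x)\}$, the two half-balls glue across their boundary disks via $\varphi$ into a full open ball, and the two halves of the metric match because $\varphi$ is an isometry of $M_2$ and $\cosh^2$ is even; fixed-point-freeness is exactly what rules out orbifold points. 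Once this local model is in place, $W$ is a compact hyperbolic $(n+1)$-manifold with totally geodesic boundary $M$, so $M$ bounds geometrically.
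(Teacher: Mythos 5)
Your proof is correct and follows essentially the same route as the paper's: cut the ambient manifold along the embedded copy of $M$ and, when the cut piece has two boundary components, fold one of them onto itself via the fixed point free involution $\varphi$. The only difference is that you spell out the local warped-product model $dt^2+\cosh^2(t)\,g_{M}$ and the quotient by $(y,t)\mapsto(\varphi(y),-t)$ to verify that the folded quotient is a genuine smooth hyperbolic manifold, a point the paper simply asserts.
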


\begin{proof}
Let $M$ embed into an orientable manifold $N'$ as a totally geodesic submanifold of codimension $1$. Let us denote by $N$ the manifold obtained by cutting $N'$ along $M$ and taking a connected component. Then either $\partial N = M$ and we are done, or $\partial N = M \sqcup M$, and we can quotient out one copy of $M$ in $\partial N$ by self-identifying it via $\varphi$. Given that $\varphi$ is a fixed point free involution, the resulting metric space $N_\varphi$ will be a hyperbolic manifold with a single boundary component isometric to $M$. Moreover, $N_\varphi$ is orientable or not depending on whether $\varphi$ is orientation-reversing or not. 
\end{proof}

Below we provide two illustrative examples: though none of them is of a hyperbolic manifold bounding another, they give a picture that is easy to visualise.

\begin{ex}[The $2$-torus]
\normalfont
Let $N' = \mathbb{C}/ \Gamma$, where $\Gamma = \langle z \to z+1, z \to z+i \rangle$. Then $N' \cong \mathbb{S}^1 \times \mathbb{S}^1$ is a flat torus. Let $M \cong \mathbb{S}^1$ be embedded into $N'$ as the image of the interval $J = \{ i\cdot t \,|\, t \in [0, 1] \} \subset i\cdot \mathbb{R}$. Then $M$ is a totally geodesic non-separating submanifold of $N'$. 

By cutting $N'$ along $M$, we get the manifold $N\cong \mathbb{S}^1 \times [0,1]$, with $\partial N = M \sqcup M$. Consider then the antipodal map $\varphi$ on $M=\mathbb{S}^1$, which is an orientation-preserving fixed point free involution. The resulting manifold $N_\varphi$ is the M\"obius strip. This is a non-orientable flat $2$-manifold with only one boundary component $M$.
\end{ex}

\begin{ex}[The twisted $I$--bundle]
\normalfont
A higher--dimensional generalisation of the previous example is  the following. Let $\varphi$ be the free (orientation--preserving) involution of a genus $3$ orientable surface $S$ that quotients it down to a genus $2$ surface. Let $N = S \times [0,1]$. When we quotient out $N \times \{1\}$ by $\varphi$, then we obtain $N_\varphi$ that is a twisted $I$--bundle over $S$, and thus cannot be orientable. 
\end{ex}

\begin{rem}[Rokhlin's theorem]
\normalfont
If $M$ is a topological closed $3$--manifold that admits a fixed point free involution $\varphi$, then the proof of Rokhlin's theorem can be reduced to a trivial construction. Namely, taking $W = M \times [0,1]$, so that we can quotient out, say, $M \times \{1\}$ by $\varphi$, and get the desired $N_\varphi$ with $\partial N_\varphi = M \times \{0\}$. 
\end{rem}

\subsection{Towers of rational homology spheres}
\label{get_tower}
The main ideas of the construction build on the works \cite{BE} and \cite{CD}. To state the result that we will make use of, we need to recall some of \cite[Section 6]{CD}.

For an odd prime $p$, a finite $p$--group $S$ is {\em powerful} if $S/S^p$ is Abelian, where $S^p$ is the subgroup of $S$ generated by all $p$--th powers of its elements. When $p=2$, the condition is that $S/S^4$ is Abelian.

A finitely generated group $\Gamma$ is called {\em $p$--powerful} if every finite $p$--group quotient of $\Gamma$ is powerful. 

\begin{prop}\cite{CD}
\label{propCD}
Let $\Gamma$ be a finitely generated group which is $p$--powerful. If $H_1(\Gamma, \Z)$ is finite, then $H_1(H, \Z)$ is finite for any subgroup $H \subset \Gamma$ of $p$--power index.
\end{prop}

Let $G$ be a group: its \textit{$\mathrm{mod}~p$ lower central series} is defined inductively as $\gamma^p_1(G) = G$, with $\gamma^p_{n+1}(G) = \langle (\gamma^p_n(G))^p, [G, \gamma^p_n(G)] \rangle \subseteq \gamma^p_n(G)$ for $n\geq 1$. Then $G$ is {\em residually--$p$} if we have $\bigcap_{n\geq 1} \gamma^p_n(G) = \{1\}$.

\begin{lemma}
\label{build_bound}
Let $M=\H^3/\Gamma$ be an arithmetic hyperbolic rational homology $3$--sphere of simplest type arising from an admissible quadratic form over a totally real field $\ell$ with the following properties:
\begin{enumerate}
\item $\Gamma$ is $\ell$--located;
\item $\Gamma$ is $p$--powerful for some odd prime $p$;
\item $\Gamma$ is residually--$p$;
\item $M$ has a double cover $M' = \H^3/\Delta$ which is a rational homology $3$--sphere, and $\Delta$ is $p$--powerful.
\end{enumerate}
Then there exists a tower of rational homology $3$--spheres $M_j=\H^3/\Delta_j$ which are regular $p$--power coverings of $M'$ that bound geometrically a hyperbolic $4$--manifold $W_j$. In the case when
$\ell=\Q$, the manifold $W_j$ has cusps.\end{lemma}

\begin{proof} By hypotheses, $\Gamma$ is $p$--powerful and residually--$p$, so Proposition \ref{propCD} implies that there exists an infinite tower of $p$--power index normal subgroups $\Gamma_j\lhd \Gamma$ for which all the manifolds
$\H^3/\Gamma_j$ are rational homology $3$-spheres. Set $\Delta_j=\Delta\cap \Gamma_j$.

Since $[\Gamma:\Delta]=2$, it is clear that $\Delta_j \lhd \Gamma_j$. Indeed,
$$\Gamma_j/\Delta_j = \Gamma_j/\Gamma_j\cap\Delta \cong \Delta\Gamma_j/\Delta \subset \Delta\Gamma/\Delta = \Gamma/\Delta \cong \Z/2\Z.$$

Note that $\Delta$ is not a subgroup of $\Gamma_j$ for any $j$ since $[\Gamma:\Delta]=2$, while $[\Gamma:\Gamma_j]=p^k$ for an odd prime $p$ and some integer $k>0$. Thus $[\Gamma_j:\Delta_j] = 2$ for all $j$.
By construction, $\Gamma_j\lhd\Gamma$ with quotient a finite $p$--group, hence $\Delta_j\lhd\Delta$ with quotient a finite $p$--group.  The residually--$p$ condition implies that $\bigcap_{j\geq 1} \Gamma_j = 1$, and so $\bigcap_{j\geq 1} \Delta_j = 1$.

Putting all of this together, since $\Delta$ is $p$--powerful, and each $M_j=\H^3/\Delta_j$ is a $p$--power regular cover of $N$, Proposition \ref{propCD} applies to show that each $M_j$ is a rational homology $3$-sphere. In addition, each $M_j$ double covers the rational homology $3$-sphere $M'_j = \H^3/\Gamma_j$.  

By assumption, $\Gamma$ and thus $\Gamma_j$ and $\Delta_j$ are all arithmetic of simplest type and $\ell$--located.  Hence Theorem \ref{thm:KRS} applies to embed all of the manifolds $M_j$ in an arithmetic hyperbolic $4$--manifold $N_j$.
Note that if $\ell=\Q$ then $N_j$ is necessarily cusped (see \cite{KRS} for example). Regardless, Lemma \ref{lemma-embed-bound} applies to the $M_j$ to promote it from being embedded to bounding geometrically. \end{proof}

\begin{rem} 
\normalfont
As discussed in \cite[Remark 6.5]{CD}, whether a group is $p$--powerful for a given odd prime $p$ can be readily checked, and it reduces to checking whether the maximal finite $p$--group quotient of nilpotency class $2$ is powerful. This can be done in Magma \cite{Mag} via the {\tt{pQuotient}} routine, and a test routine {\tt{IsPowerful}} (see \S \ref{magma} for examples). \end{rem}

\section{Bounding compact hyperbolic $4$--manifolds}
\label{bound_compact}
For the case of bounding a compact manifold, we work with the commensurability class of the group generated by reflections in the faces of the right--angled dodecahedron $\mathcal{D}$ in $\H^3$. This in turn is commensurable with the tetrahedral
group $T = T_4[2,2,3;3,5,2]$ (which is $T_4$ in \cite[Chapter 13.1]{MR}). Indeed, the dodecahedron $\mathcal{D}$ can be split into $120$ copies of its fundamental orthoscheme $T'$ with Schl\"afli symbol $[5,3,4]$ (which is $T_2$ in \cite[Chapter 13.1]{MR}). Two copies of $T'$ glued along an appropriate face produce $T$: one may also think of reflecting $T'$ in one of its faces. We use $T$ instead of $T'$ because it gives rise to a group that is the unit group of a maximal order, and is more convenient for our computations. 

Let $\Gamma$ denote the subgroup of index $2$ in the group generated by reflections in the faces of $T$ consisting of orientation--preserving isometries. A presentation for $\Gamma$ is given by

$$\langle x, y, z \,|\, x^2=y^2=z^3=(yz)^3=(zx)^5=(xy)^2=1 \rangle.$$

The arithmetic information associated to $\Gamma$ is the following. From \cite{MR0}, $\Gamma=\Gamma_{\mathcal{O}}^1$ where $\mathcal{O}$ is a maximal order (unique up to $B^*$--conjugacy) of the quaternion algebra $B/k$,
where $k=\Q(\theta)$, with $\theta^4 - \theta^2 - 1 = 0$, is a degree $4$ complex extension of $\Q$ with two real places, and $B$ is ramified at both of them. Note that the maximal totally real subfield of $k$ is $\ell = \Q(\sqrt{5})$ and, since $\Gamma$ is derived from a quaternion algebra, it follows from Remarks \ref{rem2} and \ref{rem3} that $\Gamma$ is $\ell$--located.

The ring $R_k$ contains two prime ideals of norm $11$. We will use reduction modulo one of these prime ideals, which will be denoted by $\mathcal{P}$, to get an epimorphism $\phi: \Gamma\rightarrow \PSL(2, \mathbb{F}_{11})$, the kernel $\Gamma_1$ of which will provide the initial rational homology $3$--sphere $M = \H^3/\Gamma_1$ to apply Lemma \ref{build_bound}.
From \S \ref{magma_bound_compact}, we see that $H_1(M,\Z)\cong (\Z/2\Z)^7 \oplus (\Z/22\Z)^3$. The Magma routine in \S \ref{magma_bound_compact} establishes that $\Gamma_1$ is $11$--powerful.
By reducing modulo $\mathcal{P}^n$ we get a tower of normal subgroups $\Gamma_j$ of $11$--power index in $\Gamma_1$ with $\bigcap_{j\geq 1} \Gamma_j = \{1\}$. In particular,  $\Gamma_1$ is residually--$11$. 

There are $1023$ subgroups of index $2$ in $\Gamma_1$, and we get Magma to test which of these index $2$ subgroups also have finite abelianisation (there are $363$ of them).  We choose one of these as our subgroup $\Delta$ to apply 
Lemma \ref{build_bound}. Two examples {\tt{M1}} and {\tt{M3}} are taken from this list and Magma certifies that Lemma \ref{build_bound} can be indeed applied.

\begin{rem} \label{rem_vol} 
\normalfont
The smallest volume of one of the rational homology $3$-spheres constructed above equals $4 \cdot |\PSL(2,\mathbb{F}_{11})| \cdot \Vol(T)$ which is approximately $189.4464\ldots$\end{rem}

\section{Bounding cusped hyperbolic $4$-manifolds: building on the examples of \cite{CD} and \cite{BE}}
\label{ex:CD_BE}
We begin by recalling the arithmetic rational homology $3$-spheres of \cite{CD} and \cite{BE}. Thus, 
let $B$ be the quaternion division algebra over $\Q(\sqrt{-2})$ ramified at the prime ideals $\mathcal{P} = \langle 1+\sqrt{-2} \rangle$ and $\overline{\mathcal{P}} = \langle 1-\sqrt{-2} \rangle$ of $\Z[\sqrt{-2}]$ of norm $3$. Let $\mathcal{O}\subset B$ be a maximal order (which is unique up to $B^*$--conjugacy since the type number is $1$), and let 
$\Gamma_{\mathcal{O}}^1$ denote the image in $\PSL(2,\C)$ of the elements of norm $1$. 

In \cite{CD}, Calegari and Dunfield construct a tower of finite index subgroups $\Gamma_j$ in $\Gamma_{\mathcal{O}}^1$ with the following properties: 

\begin{itemize}
\item[(1)] $\Gamma_1=\Gamma_{\mathcal{O}}^1$ and $\Gamma_{j+1} \subset \Gamma_j$;
\item[(2)] $\Gamma_{j+1} \lhd \Gamma_j$ and $\Gamma_j\lhd \Gamma_1$ for all $j$;
\item[(3)]  $\Gamma_j/\Gamma_{j+1}\cong (\Z/3\Z)^2$, resp. $\cong \Z/3\Z$, when $j$ is odd, resp. $j$ is even;
\item[(4)] $\bigcap_{j \geq 1} \Gamma_j = 1$;
\item[(5)] $M_j=\H^3/\Gamma_j$ is a rational homology $3$--sphere for $j\geq 2$.
\end{itemize}

Note that in the construction of the manifolds $M_j$ in \cite{CD}, the fact they were rational homology $3$--spheres was conditional on the Generalised Riemann Hypothesis and part of the Langlands Program, but this was established unconditionally in \cite{BE}.

Another important feature of the commensurability class of $\Gamma_{\mathcal{O}}^1$ is that each group $\Gamma$ commensurable 
with $\Gamma_{\mathcal{O}}^1$ contains arithmetic Fuchsian subgroups, and so if $\Gamma$ is torsion-free, the manifold $\H^3/\Gamma$ contains immersed totally geodesic surfaces (see \cite[Theorem 9.5.4]{MR}). In particular, all the 
manifolds $M_j$, $j\geq 2$, contain immersed totally geodesic surfaces.  Hence, by Remark \ref{rem2} each of the manifolds $M_j$ are of simplest type, and since the totally real subfield of index $2$ is $\Q$, these are simplest type 
for admissible quadratic forms defined over $\Q$. In addition, since each of the groups $\Gamma_j$ are derived from a quaternion algebra, Remark \ref{rem3} applies to
each of the groups $\Gamma_j$ (so they satisfy the hypothesis of Theorem \ref{thm:KRS}), and so the manifolds $M_j$ embed in a cusped hyperbolic $4$--manifold $X_j$.

We will now build a second tower of arithmetic rational homology $3$--spheres $N_j$ with $N_j\rightarrow M_j$ a double cover. The discussion above concerning $M_j$ applies equally well to $N_j$, and so we can deduce that each of the manifolds $N_j$, $j\geq 2$,  embeds in a cusped hyperbolic $4$--manifold. The point about passing to the $N_j$ is that by construction, they admit a free involution and so Lemma \ref{build_bound} will apply to arrange bounding.  Below we provide the necessary details. 

In fact our starting point is the group $\Gamma_2$.  We will make use of a presentation of $\Gamma_2$ computed from that given for $\Gamma_1$ in \S \ref{magma_CD_BE} (as in \cite{CD} and \cite{BE}). As above, we will make use of Magma \cite{Mag} in what follows, and the Magma routine including all the calculations is included in \S \ref{magma_CD_BE}. That $\Gamma_2$ is $3$--powerful is already established in \cite{CD} and \cite{BE}, and from 
the properties of the groups $\Gamma_j$ listed above, we see that $\Gamma_2$ is residually--$3$.

Referring to \S \ref{magma_CD_BE}, we see that $H_1(\Gamma_2,\Z)\cong \Z/6\Z\oplus \Z/6\Z \oplus \Z/36\Z$, and so $\Gamma_2$ has $7$ subgroups of index $2$. 
We will choose one of these subgroups, namely {\tt{L4}} (from the Magma routine in \S \ref{magma_CD_BE}), which we define as $\Delta_2$.  
The construction of our new tower of rational homology $3$--spheres will be completed by applying Lemma \ref{build_bound} once we establish that $\Delta_2$ is $3$--powerful. As before this is certified using Magma \cite{Mag} via the {\tt{pQuotient}} routine, and the routine {\tt{IsPowerful}}.  We refer the reader to \S \ref{magma_CD_BE}.

\begin{rem} 
\normalfont
Using the calculations of \cite{CD} it can be shown that the smallest volume of one of the rational homology $3$--spheres constructed above is approximately $144.5531\ldots$, which is of the same order of magnitude as the example in Remark \ref{rem_vol}.\end{rem}

\section{Examples of $4$--manifolds using Theorem \ref{thm:KRS}}
\label{examples}

We briefly describe how to implement Theorem \ref{thm:KRS} to provide infinitely many commensurability classes of closed and cusped hyperbolic $4$--manifolds $Y_j$ and $W_j$ for which $X_j$ and $N_j$ embeds, thereby allowing us to conclude the proof of Theorems \ref{main-a} and \ref{main-b}. To do this, we need to construct an admissible
quadratic form over a totally real field.

\medskip

\paragraph{\bf Closed case.} As follows from \cite{Bug} the group $\Gamma$ is a subgroup in the group $\mathrm{O}(\mathrm{f}, R_\ell)$ of the admissible quadratic form $\mathrm{f} = x^2_1 + x^2_2 + x^2_3 - \frac{1 + \sqrt{5}}{2} x^2_4$ over the field $\ell = \mathbb{Q}(\sqrt{5})$ with the ring of integers $R_\ell = \Z[{\frac{1+\sqrt{5}}{2}}]$. Let $\mathrm{q} = x^2_0 + f$. The separability arguments from \cite{KRS} can be adopted so that we produce a tower of manifold coverings $N'_i \rightarrow \mathbb{H}^4 / \mathrm{SO}(\mathrm{q}, R_\ell)$, for $i = 1, 2, \ldots$, of ever increasing degrees (and thus having different volumes), such that $M_j = \mathbb{H}^3 / \Gamma_j$ embeds in each $N'_i$. By applying Lemma~\ref{lemma-embed-bound} to each $N'_i$ we get an an infinite sequence $W'_i$ with $\partial W'_i = M_j$. Thus, in Theorem~\ref{main-a}, we can set $W_j = W'_i$, for any $i = 1, 2, \ldots$

\medskip

\paragraph{\bf Cusped case.} The quaternion algebra $B/\Q(\sqrt{-2})$ used by \cite{CD} can be described via a Hilbert symbol as $\biggl({\frac{-1,3}{\Q}}\biggr)\otimes_\Q \Q(\sqrt{-2})$.  Using \cite{MR0} or \cite[Chapter 10.2]{MR}, an admissible
quadratic form is $\mathrm{f} = x^2_1+6x^2_2+6x^2_3-2x^2_4$. Now let $\mathrm{q} =  x^2_0 + \mathrm{f}$ and apply the above argument to get infinitely many rational homology $3$--spheres $X_j$ embedding each into infinitely many manifolds $N'_i$, and thus each bounding infinitely many $W'_i$'s. The only difference being that $W'_i$ are each cusped. Then we can set $Y_j = W'_i$ for any $i=1,2,\ldots$ 

\section{Colourings and rational homology $3$--spheres}
\label{colorQHS}

In this section we provide a more concrete construction of some "sporadic" rational homology $3$--spheres that bound geometrically.  These will be built from the all right dodecahedron in $\H^3$, and will be commensurable with the examples in \S \ref{bound_compact}. The details are given in the subsections below.

\subsection{Colourings of right--angled polyhedra}\label{sec:colourings-prelim}

A finite-volume polytope $\mathcal{P} \subset \mathbb{X}^n$ (for $\mathbb{X}^n = \mathbb{S}^n, \mathbb{E}^n, \mathbb{H}^n$ being spherical, Euclidean and hyperbolic $n$--dimensional space, respectively, see \cite[Chapters 1--3]{Ratcliffe}) is called {\itshape right--angled} if any two codimension $1$ faces (or facets, for short) are either intersecting at a right angle or disjoint. It is known that compact hyperbolic right-angled polytopes cannot exist if $n > 4$ \cite{PV}. The only compact right-angled spherical and Euclidean polytopes are the $n$--simplex and the $n$--parallelotope, respectively. A sufficient condition for an abstract $3$--polytope to be realisable as right--angled hyperbolic one is given in \cite[Theorem 2.4]{Vesnin}. There is no such classification for right-angled $n$--polytopes with $n\geq 4$. We refer the reader to \cite{Dufour, Kolpakov, PV, Vesnin} for more information on right--angled polytopes.

One of the important properties of hyperbolic right-angled polytopes is that their so-called colourings provide a rich class of hyperbolic manifolds. By inspecting the combinatorics of a colouring, one may obtain important topological and geometric information about the associated manifold.

Let $\mathcal{P} \subset \mathbb{X}^n$ be a compact, right-angled polytope with the set of facets $\mathcal{F}$. A \textit{colouring} of $\mathcal{P}$ is a map $\lambda: \mathcal{F} \rightarrow W$, where $W$ an $\mathbb{Z}/2 \mathbb{Z}$--vector space. The map $\lambda$ is called \textit{proper} if, for every vertex $v=F_1\cap \ldots \cap F_n$, the vectors $\lambda(F_1), \ldots, \lambda(F_n)$ are linearly independent.

If the polytope $\mathcal{P}$ or the vector space $W$ are clear from the context, then we will omit them and simply refer to $\lambda$ as a colouring. The \textit{rank} of $\lambda$ is the $\mathbb{Z}/2\mathbb{Z}$--dimension of $\mathrm{im}\, \lambda$. We will always assume that colourings are surjective, in the sense that the image of the map $\lambda$ is a generating set of vectors for $W$.

A colouring of a right-angled $n$--polytope $\mathcal{P}$ naturally defines a homomorphism, which we still denote by $\lambda$ without much ambiguity, from the associated right-angled Coxeter group~$\Gamma(\mathcal{P})$, that is generated by reflections in all the facets of $\mathcal{P}$, into $W$ with its natural group structure. Being a Coxeter polytope, $\mathcal{P}$ has a natural orbifold structure as the quotient $\mathbb{X}^n /_{\Gamma(\mathcal{P)}}$.

\begin{prop}[\cite{DJ91}, Proposition 1.7]\label{DJ}
If the colouring $\lambda$ is proper, then $\ker \lambda < \Gamma(\mathcal{P})$ is torsion-free, and $\mathcal{M}_\lambda = \mathbb{X}^n /_{\ker \lambda}$ is a closed manifold.
\end{prop}

Notice that if $\mathcal{P} \subset \mathbb{X}^n$ is a compact right-angled polytope then $\mathcal{P}$ is necessarily simple and its dual $K=(\partial \mathcal{P})^*$ is a simplicial complex.

We say that a $(\mathbb{Z}/2\mathbb{Z})^k$-colouring $\lambda$ is \textit{orientable} if the orbifold $\mathcal{M}_\lambda$ is orientable. We have the following criterion for orientability.

\begin{prop}[\cite{KMT}, Lemma 2.4]\label{prop:orientability}
The orbifold $\mathcal{M}_\lambda$ is orientable if and only if $\lambda$ is equivalent to a colouring that assigns to each facet a colour in $W \cong (\mathbb{Z}/2\mathbb{Z})^k$ with an odd number of entries $1$.
\end{prop}

Given a right-angled polytope $\mathcal{P} \subset \mathbb{X}^n$ with a $(\mathbb{Z}/2\mathbb{Z})^k$--colouring $\lambda$, let us enumerate the facets $\mathcal{F}$ of $\mathcal{P}$ in some order. Then we can assume that $\mathcal{F} = \{ 1, 2, \ldots, m \}$. Let $\Lambda$ be \textit{the defining matrix} of $\lambda$ that consists of the column vectors $\lambda(1), \ldots, \lambda(m)$ exactly in this order. Hence $\Lambda$ is a matrix with $k$ rows and $m$ columns. More precisely, $\Lambda$ represents the abelianisation of $\lambda$, i.e. the former is a map such that $\Lambda \circ \mathrm{ab} = \lambda$, where $\mathrm{ab}:\Gamma \to (\mathbb{Z}/2\mathbb{Z})^m$ is the abelianisation map that takes $r_i$, the reflection of the facet $i$, to $e_i$.

Let $\mathrm{Row}(\Lambda)$ denote the row space of $\Lambda$, while for a vector $\omega \in \mathrm{Row}(\Lambda)$ let $K_\omega$ be the simplicial subcomplex of the complex $K=K_\mathcal{P}$ spanned by the vertices $i$, also labelled by the elements of $\{1, 2, \ldots, m\}$, such that the $i$--th entry of $\omega$ equals $1$.

Then the rational cohomology of $\mathcal{M}_{\lambda}$ can be computed via the following formula, cf. \cite[Theorem 1.1]{CP2}.

\begin{equation}\label{eq:cohomology}
H^p(\mathcal{M}_{\lambda},\mathbb{Q})\cong \underset{\omega \in \mathrm{Row}(\Lambda)}{\bigoplus}\widetilde{H}^{p-1}(K_{\omega},\mathbb{Q}). 
\end{equation}

Moreover, the cup product structure is given by the maps \cite[Main Theorem]{CP2}:

\begin{equation}\label{eq:cup products}
\widetilde{H}^{p-1}(K_{\omega_1},\mathbb{Q}) \otimes \widetilde{H}^{q-1}(K_{\omega_2},\mathbb{Q}) \mapsto \widetilde{H}^{p+q-1}(K_{\omega_1 + \omega_2},\mathbb{Q}). 
\end{equation}

\subsection{Colouring extensions}\label{sec:extensions}

Let $\lambda:\mathcal{F}\to (\mathbb{Z}/2\mathbb{Z})^k$ be any colouring. A (surjective) colouring $\mu:\mathcal{F}\to (\mathbb{Z}/2\mathbb{Z})^{k+1}$ is called \textit{an extension of} $\lambda$ if there is a linear projection $p:(\mathbb{Z}/2\mathbb{Z})^{k+1}\to (\mathbb{Z}/2\mathbb{Z})^k$ such that $\lambda=p\circ\mu$.

\begin{prop}\label{prop:double-cover colouring}
Let $\lambda:\mathcal{F}\to (\mathbb{Z}/2\mathbb{Z})^k$ be any colouring and $\mu:\mathcal{F}\to (\mathbb{Z}/2\mathbb{Z})^{k+1}$ its extension. Then $\mathcal{M}_\mu$ double-covers $\mathcal{M}_\lambda$. Moreover, if $\lambda$ is proper or orientable, so is $\mu$.
\end{prop}

\begin{proof}
Let $\Gamma=\Gamma(\mathcal{P})$ be the reflection group associated with $\mathcal{P}$. Let $\lambda:\Gamma \to (\mathbb{Z}/2\mathbb{Z})^k$ and $\mu:\Gamma \to (\mathbb{Z}/2\mathbb{Z})^{k+1}$ be the homomorphisms induced by $\lambda$ and $\mu$, respectively. By definition, we have that $\lambda = p \circ \mu$, and it follows that $\mathrm{ker} \, \lambda=\mathrm{ker} \, (p \circ \mu)=\mu^{-1}(\mathrm{ker}\, p)$. Moreover, $\mathrm{Im}\,p\cong (\mathbb{Z}/2\mathbb{Z})^k$ and $|\mathrm{ker} \, p|=[(\mathbb{Z}/2\mathbb{Z})^{k+1}:\mathrm{Im}\,p]=2$. Thus $\mathrm{ker} \, p=\{0,v_0\}$ for some $v_0\in (\mathbb{Z}/2\mathbb{Z})^{k+1}$, $v_0 \neq 0$.

Since $\mu$ is surjective, there exists $u_0\in \mu^{-1}(v_0)\neq \emptyset$. Since $\mu$ is a homomorphism, $\mu^{-1}(v_0)=u_0+ \mathrm{ker}\, \mu $. Then $\mathrm{ker} \, \lambda = \mathrm{ker} \, \mu \sqcup (u_0+\mathrm{ker}\,\mu)$, and thus $\mathrm{ker}\,\mu \triangleleft_{2} \mathrm{ker}\, \lambda$. Hence $\mathcal{M}_\mu$ is a double cover of $\mathcal{M}_\lambda$. 

Finally, assume that $\{\lambda(F_1), \ldots, \lambda(F_s)\} \subset (\mathbb{Z}/2\mathbb{Z})^k$ is a set of linearly independent colours. By using the fact that $\lambda = p \circ \mu$, we easily obtain that  $\mu(F_1), \ldots, \mu(F_s)$ are linearly independent. Hence, if $\lambda$ is proper then $\mu$ is proper too. Also, if $\mathcal{M}_\mu$ double-covers $\mathcal{M}_\lambda$ and the latter is orientable, so is $\mathcal{M}_\mu$.
\end{proof}

One direct application of Equation~\eqref{eq:cohomology} to extensions of colourings is the following.

\begin{prop}\label{prop:double-cover properties}
Let $\lambda:\Gamma \to (\mathbb{Z}/2\mathbb{Z})^k$ be a colouring and $\mu$ its extension. Let also $\Lambda$ and $\mathrm{M}$ be their respective defining matrices. Then, up to equivalence, $\mathrm{M}$ is the matrix obtained from $\Lambda$ by adding an extra row vector $v\in (\mathbb{Z}/2\mathbb{Z})^m =\mathrm{ab}(\Gamma)$, such that $v\notin \mathrm{Row}(\Lambda)$. Moreover, if $\lambda$ is orientable, so is $\mu$. Finally, for all $p\ge 0$,
\begin{equation*}
    H^p(\mathcal{M}_\mu,\mathbb{Q})=H^p(\mathcal{M}_\lambda,\mathbb{Q})\oplus \left( \underset{\omega \in \mathrm{Row}(\Lambda)}{\bigoplus}\widetilde{H}^{p-1}(K_{\omega+v},\mathbb{Q}) \right). 
\end{equation*}
\end{prop}

\begin{proof}
Up to isomorphism, we may assume the projection $p : (\mathbb{Z}/2\mathbb{Z})^{k+1} \to (\mathbb{Z}/2\mathbb{Z})^k$ is just the canonical projection onto the first $k$ coordinates. Then, since $p\circ \mathrm{M} = \Lambda$, it is clear that $\mathrm{M}$ is the matrix $\Lambda$ with another row $v\in (\mathbb{Z}/2\mathbb{Z})^m$ added. Moreover, $\mu$ is surjective if and only if $\mathrm{M}$ is surjective, and the latter holds if and only if $v\notin \mathrm{Row}(\Lambda)$. The colouring extensions can be seen in red in the diagram below:

\vspace*{0.55in}
$$
\begin{tikzcd}
\Gamma \arrow[overlay, r, red, "\mu"] \arrow[overlay, d, "ab"] \arrow[overlay, dr, out=90, in=0, looseness=2.5, "\lambda"] 
& (\mathbb{Z}/2\mathbb{Z})^{k+1} \arrow[overlay, d, "p"] \\
(\mathbb{Z}/2\mathbb{Z})^m \arrow[overlay, r, "\Lambda"] \arrow[overlay, ur, red, crossing over, "M"]
& (\mathbb{Z}/2\mathbb{Z})^k
\end{tikzcd}
$$

Clearly, $\mathrm{Row}(\mathrm{M})=\mathrm{Row}(\Lambda)\sqcup\big(v+\mathrm{Row}(\Lambda)\big)$. We conclude by applying Equation~\eqref{eq:cohomology}.
\end{proof}

Conversely, there is a criterion to tell whether a given colouring $\mu$ is an extension of some other colouring $\lambda$.

\begin{prop}\label{prop:stabilizer condition}
Let $\mu:\Gamma(\mathcal{P})\to W$ be a proper colouring, and let $W_p = \mu\big(\mathrm{Stab}_\Gamma(p)\big)$ for any vertex $p$ of $P$. Then $\mu$ is an extension of some proper colouring if and only if $\bigcup_p W_p \subsetneq W$.
\end{prop}

\begin{proof}
Assume that there is a projection $p : W \cong (\mathbb{Z}/2\mathbb{Z})^k \to (\mathbb{Z}/2\mathbb{Z})^{k-1}$ such that $p\circ \mu$ is a proper colouring. Then, for any codimension $s$ face $f = F_1\cap \ldots \cap F_s$ of $\mathcal{P}$ we have $(p\circ\mu)(F_1) + \ldots + (p\circ\mu)(F_s)\neq 0$. This means that $\mu(F_1) + \ldots + \mu(F_s)\notin \mathrm{ker}\, p$. As in the proof of Proposition~\ref{prop:double-cover colouring}, we have that $\mathrm{ker}\, p=\{0,v_0\}$ for some $v_0\in W$ and, in particular, $\mu(F_1) + \ldots + \mu(F_s)\neq v_0$ for any face $f=F_1\cap \ldots \cap F_s$. It follows that $v_0\notin W_f$ for any such face $f$ and, in particular, for any vertex $q\in \mathcal{P}$.

Conversely, assume there is a vector $v_0\in W\setminus \bigcup_{q} W_q$. Then $\mu(F_1)+ \ldots +\mu(F_s) \neq v_0$ for any codimension $s$ face $f = F_1\cap \ldots \cap F_s$ of $\mathcal{P}$. Let $W\cong (\mathbb{Z}/2\mathbb{Z})^k$ and $p : (\mathbb{Z}/2\mathbb{Z})^k \to (\mathbb{Z}/2\mathbb{Z})^{k-1}$ be the projection along $v_0$. Since $\mu$ is proper, we also have that $\mu(F_1)+ \ldots +\mu(F_s) \neq 0$ for any face $f = F_1\cap \ldots \cap F_s$, that is, $\mu(F_1)+ \ldots +\mu(F_s) \notin \{0,v_0\}=\mathrm{ker} \, p$. Let us then set $\lambda = p\circ \mu$. This is a proper colouring since $\lambda(F_1)+ \ldots +\lambda(F_s)\notin p(\mathrm{ker}\, p)=\{0\}$ for all faces $f = F_1\cap \ldots \cap F_s$. By definition, $\mu$ is an extension of $\lambda$.
\end{proof}

\begin{ex}[The Hantzsche--Wendt colouring]
\normalfont
Let $\lambda$ be the colouring of the $3$--cube defined in \cite[p.~8]{FKS} such that $\mathcal{M}_\lambda$ is the Hantzsche--Wendt manifold \cite{HW}. In particular, $\mathrm{rank}\,\lambda = 4$. However, we have that $\bigcup_{p} W_q = W$, and it follows from Proposition \ref{prop:stabilizer condition} that $\lambda$ is not an extension of any colouring.
\end{ex}

\subsection{Rational homology $3$--spheres}\label{sec:3-qhs}

We say that a $CW$--complex is a \textit{rational homology point} if all its reduced  $\mathbb{Q}$--homology groups are trivial.

Let $\epsilon = (1, \ldots, 1) \in (\mathbb{Z}/2\mathbb{Z})^m$. By Proposition~\ref{prop:orientability}, we have that $\epsilon \in \mathrm{row} \, \Lambda$ for every orientable $\lambda$, since it's given by the sum of rows of $\Lambda$. By applying Equation~\eqref{eq:cohomology}, we have the following.

\begin{lemma}\label{lemma:subcomplexes are HP}
An orientable $\mathcal{M}_{\lambda}$ is a rational homology sphere if and only if for all $\omega \in \mathrm{Row}(\Lambda) \setminus \{0, \varepsilon\}$, $K_\omega$ is a rational homology point.
\end{lemma}

\begin{proof}
The only non-trivial cohomology groups of $\mathcal{M}_{\lambda}$ are $H_n(\mathcal{M}_{\lambda},\mathbb{Q})\cong H^0(\mathcal{M}_{\lambda},\mathbb{Q})\cong \widetilde{H}^{-1}(K_0,\mathbb{Q})\cong \mathbb{Q}$ and $H_0(\mathcal{M}_{\lambda},\mathbb{Q})\cong H^n(\mathcal{M}_{\lambda},\mathbb{Q})\cong \widetilde{H}^{n-1}(K_\varepsilon,\mathbb{Q})\cong \mathbb{Q}$. Therefore, every other simplicial subcomplex $K_\omega$ must have trivial reduced homology groups.
\end{proof}

By applying Equation~\eqref{eq:cup products}, we get a useful consequence. 

\begin{lemma}\label{lemma:HP condition}
Let $\mathcal{M}_{\lambda}$ be an orientable $n$--manifold and $\omega \in \mathrm{Row}(\Lambda)\setminus\{0,\epsilon\}$. Then $K_\omega$ is a rational homology point if and only if $K_{\epsilon-\omega}$ is so.
\end{lemma}

\begin{proof}
Assume that $K_\omega$ is not a rational homology point. Then $\tilde{H}^*(K_\omega,\mathbb{Q})$ is non--trivial. Let $0\neq \alpha \in \widetilde{H}^i(K_{\omega},\mathbb{Q})$ for some $i\in \{0, \ldots, n-2\}$. By Equation~\eqref{eq:cohomology}, $\alpha \in H^{i+1}(\mathcal{M}_{\lambda},\mathbb{Q})$. By \cite[Corollary 3.39]{Hatcher}, there exists $\beta \in H^{n-i-1}(\mathcal{M}_{\lambda},\mathbb{Q})$ such that $\alpha\smile \beta$ is the generator of $H^n(\mathcal{M}_{\lambda},\mathbb{Q})$. Then, by Equation~\eqref{eq:cohomology}, $\alpha \smile \beta$ is the generator of $\widetilde{H}^{n-1}(K_\epsilon, \mathbb{Q})$, since $K_\epsilon$ is homotopically $\mathbb{S}^{n-1}$. Finally, by Equations~\eqref{eq:cohomology}--\eqref{eq:cup products}, we obtain that $0\neq \beta \in \widetilde{H}^{n-i-2}(K_{\epsilon-\omega},\mathbb{Q})$, since otherwise the product $\alpha \smile \beta$ would not belong to $\widetilde{H}^{n-1}(K_\epsilon, \mathbb{Q})$. 
\end{proof}

Thus, we can improve Lemma \ref{lemma:subcomplexes are HP} algorithmically by checking only the connectivity of some graphs. 

\begin{corollary}\label{cor:algorithm}
An orientable $3$--manifold $\mathcal{M}_{\lambda}$ is a rational homology sphere if and only if for all $\omega \in \mathrm{Row}(\Lambda) \setminus \{0, \varepsilon\}$ the $1$--skeleton of $K_\omega$ is connected.
\end{corollary}

\begin{proof}
If one proper subcomplex $K_\omega$ has a non-trivial cycle then, by Lemma \ref{lemma:HP condition}, the complementary complex $K_{\varepsilon-\omega}$ will be disconnected. It suffices therefore to check if all proper, non-empty subcomplexes $K_\omega$ are connected. Clearly the connectivity of $K_\omega$ depends only on the connectivity of its $1$--skeleton.
\end{proof}

In the case of double covers, the transfer homomorphisms \cite[Section 3.G]{Hatcher} can be easily used in order to obtain the following statement:

\begin{lemma}\label{lemma:QHS double-covers}
Let $Y$ be a closed manifold that is a double cover of another manifold $X$. If $Y$ is a rational homology sphere, then $X$ is either a rational homology sphere or a rational homology point.
\end{lemma}

Thus, if we want to obtain a colouring $\mu$ producing a $3$--dimensional rational homology sphere, such that $\mu$ is an extension of a proper colouring $\lambda$, then we need that the starting colouring $\lambda$ also produce a rational homology sphere. In this regard, we can use the following algorithm.

\begin{lemma}\label{lemma:QHS extension algorithm}
Let $\lambda$ be a proper colouring such that the $3$--manifold $\mathcal{M}_\lambda$ is a rational homology sphere. Let $\mu$ be any extension of $\lambda$, obtained by adding to $\Lambda$ a row vector $v \notin \mathrm{row} \, \Lambda$. Then $\mathcal{M}_\mu$ is a rational homology sphere if and only if for every pair $\{\omega,\epsilon-\omega\} \subset \mathrm{Row}(\Lambda)$, we have that $K_{\omega+v}$ is connected and has only trivial homology $1$--cycles.
\end{lemma}

\begin{proof}
By Proposition \ref{prop:double-cover properties}, we have that $\mathcal{M}_\mu$ is a rational homology sphere if and only if $K_{\omega+v}$ is homologically trivial for every $\omega \in \mathrm{Row}(\Lambda)$. By Lemma \ref{lemma:HP condition}, $K_{\omega+v}$ is homologically trivial if and only if $K_{\epsilon-(\omega+v)}$ is so. Since $\epsilon-(\omega+v)=(\epsilon-\omega)+v$ and $\epsilon \in \mathrm{Row}(\Lambda)$, we have that also $\epsilon-\omega \in \mathrm{Row}(\Lambda)$. Therefore, it is enough to check whether $K_{\omega+v}$ is homologically trivial for each pair $\{\omega,\epsilon-\omega\} \subset \mathrm{Row}(\Lambda)$. Since $K$ is homeomorphic to $\mathbb{S}^2$ and $K_{\omega + v}$ is a proper subcomplex of $K$, then  $K_{\omega + v}$ is homologically trivial if and only if it is connected and has only trivial homology $1$--cycles.
\end{proof}

\subsection{A rational homology sphere from colouring that bounds geometrically}

\begin{prop}\label{prop:arithmetic colourings that bound}
Let $\lambda:\Gamma(\mathcal{P})\to W$ be a proper colouring of the hyperbolic, compact, right--angled $3$--polytope $\mathcal{P}$ with arithmetic reflection group $\Gamma = \Gamma(\mathcal{P})$. If $\bigcup_q W_q \subsetneq W$, then $M_\lambda$ bounds geometrically. Equivalently, any extension of a proper colouring of $\mathcal{P}$ bounds geometrically.
\end{prop}

\begin{proof}
By \cite{Tits}, and $\Gamma$ is of simplest type, and by \cite[Theorem~5]{Vinberg} we have that $\Gamma$ is also $k$--located. Then $\mathcal{M}_\lambda = \mathbb{H}^3/\Gamma_\lambda$ is an arithmetic manifold with $k$--located  $\Gamma_\lambda$, for any proper colouring of $\mathcal{P}$. Thus, $\mathcal{M}_\lambda$ embeds geodesically by Theorem \ref{thm:KRS}.

If $\bigcup_q W_q \subsetneq W$ then, by Proposition \ref{prop:stabilizer condition}, $\lambda$ is an extension of some colouring $\mu$ and, by Proposition~\ref{prop:double-cover colouring}, we have that $\mathcal{M}_\lambda$ double-covers $\mathcal{M}_\mu$. Then $\mathcal{M}_\lambda$ has a fixed point free involution and therefore bounds geometrically by Lemma \ref{lemma-embed-bound}.
\end{proof}

\begin{theorem}\label{thm:arithmetic-QHS-that-bounds}
There is a colouring $\mu$ of the right-angled dodecahedron such that $\mathcal{M}_\mu$ is an arithmetic hyperbolic rational homology $3$--sphere that bounds geometrically.
\end{theorem}

\begin{proof}
Let $\mathcal{D}$ be the right-angled dodecahedron and take the only orientable small cover $\lambda$ of $\mathcal{D}$ given in \cite[p. 6]{GS}. Thus
\begin{equation*}
\Lambda=\begin{pmatrix}
1 & 0 & 0 & 0 & 0 & 1 & 1 & 1 & 1 & 1 & 0 & 0 \\
0 & 1 & 0 & 0 & 1 & 1 & 0 & 1 & 1 & 0 & 0 & 1 \\
0 & 0 & 1 & 1 & 0 & 1 & 0 & 1 & 1 & 0 & 1 & 0 \\
\end{pmatrix},
\end{equation*}
where the labeling of the faces of $\mathcal{D}$ is given in Figure \ref{im:dodecahedron}. By Equation~\eqref{eq:cohomology}, $\mathcal{M}_\lambda$ is a rational homology sphere. If we find an extension $\mu$ of $\lambda$ such that $\mathcal{M}_\mu$ is also a rational homology sphere, then we are done by Proposition \ref{prop:arithmetic colourings that bound}, since $\Gamma(\mathcal{D})$ is arithmetic by \cite[Lemma 3.8]{Vesnin}.

\begin{figure}[h]
    \centering
    \includegraphics[scale=0.42]{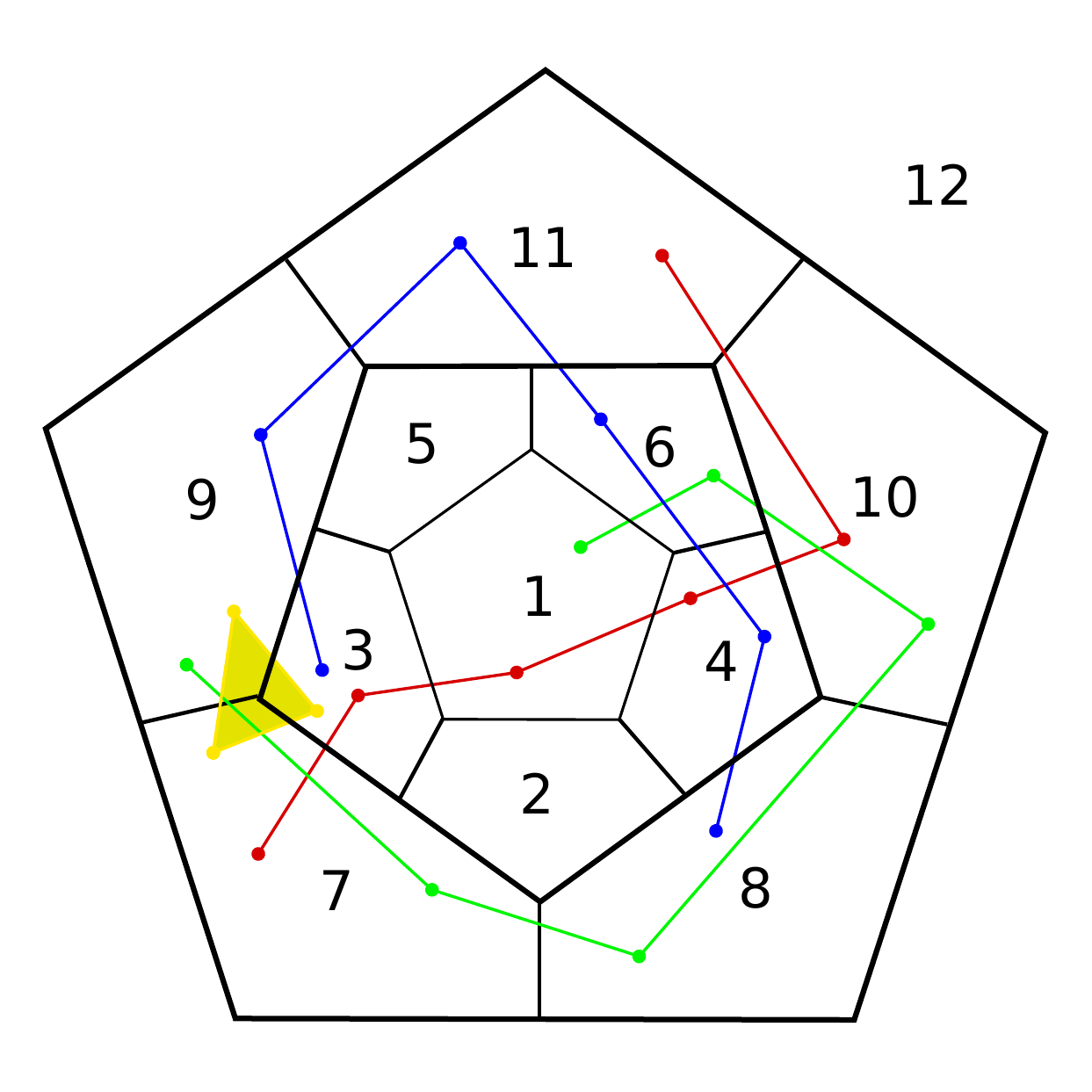}
    \caption{The dodecahedron used in the proof of Theorem~\ref{thm:arithmetic-QHS-that-bounds} with its face labelling. The red, green, blue and yellow subcomplexes are $K_{13}$, $K_{14}$, $K_{34}$ and $K_v$, respectively.}
    \label{im:dodecahedron}
\end{figure}

By Lemma \ref{lemma:QHS extension algorithm}, it is enough to find a row vector $v\in (\mathbb{Z}/2\mathbb{Z})^{12} \setminus \mathrm{Row}(\Lambda)$ such that the complexes $K_{\omega + v}$ are connected and have only trivial homology $1$--cycles for $\omega \in \{0, e^T_1 \Lambda, (e_1 + e_3)^T \Lambda, e^T_3 \Lambda\}$.

Recall that $\mathrm{Row}(\Lambda)=\{x^T \Lambda\mid x \in (\mathbb{Z}/2\mathbb{Z})^3\}$ and $\epsilon=(1, 1, 1)^T\Lambda$. Let $\omega= x^T \Lambda$ for some $x \in (\mathbb{Z}/2\mathbb{Z})^3$. Then for a face $F$ of $\mathcal{D}$ we have that $F^* \in K_{\omega}$ if and only if $x \cdot \lambda(F) = 1$. In particular, this means that the subcomplexes $K_{ij}=K_{(e_i+e_j)^T\Lambda}$ for $\{i,j\}\subset \{1,2,3\}$ are exactly the subcomplexes of $K$ with vertices coloured by $e_i$ and $e_j$, while the subcomplexes $K_{i4} = K_{(e_i)^T\Lambda}$ are the subcomplexes of $K$ with vertices coloured by $e_i$ and $e_1 + e_2 + e_3$.

Due to the constraint that $K_v$ be a rational homology point, the choice of vertices $(F_i)^*$ in $K$ such that $\mu(F_i)_4 = 1$ (or, equivalently, the choice of $v_i \neq 0$) should define one such subcomplex. By choosing $K_v$ as the simplex $\{3, 7, 9\}$ around which the three complexes $K_{13}$, $K_{14}$, $K_{34}$ are ``wrapped'', we have precisely that all four subcomplexes $K_{v+\omega}$ are rational homology points as shown in Figure~\ref{im:dodecahedron}.

Explicitly, the colouring $\mu$ with defining matrix
\begin{equation*}
\mathrm{M} = \begin{pmatrix}
1 & 0 & 0 & 0 & 0 & 1 & 1 & 1 & 1 & 1 & 0 & 0 \\
0 & 1 & 0 & 0 & 1 & 1 & 0 & 1 & 1 & 0 & 0 & 1 \\
0 & 0 & 1 & 1 & 0 & 1 & 0 & 1 & 1 & 0 & 1 & 0 \\
0 & 0 & 1 & 0 & 0 & 0 & 1 & 0 & 1 & 0 & 0 & 0 \\
\end{pmatrix}
\end{equation*}
is an extension of $\Lambda$ by Proposition \ref{prop:double-cover properties}, and $\mathcal{M}_\mu$ is a rational homology sphere by Equation~\eqref{eq:cohomology}. Hence, by Proposition~\ref{prop:arithmetic colourings that bound}, $M_\mu$ bounds geometrically. Finally, since $\Gamma(\mathcal{D})$ is arithmetic, it follows that $\mathcal{M}_\mu$ is also arithmetic.
\end{proof}

\begin{rem}\label{rem:final_remark}
\normalfont
A computer search among all possible extensions of the colouring $\Lambda$ from Theorem~\ref{thm:arithmetic-QHS-that-bounds} returned that there are, up to $DJ$--equivalence \cite[Definition 2.4]{FKS}, $7$ extensions which are rational homology $3$--spheres. However, the number of equivalence classes up to isometry might be smaller, given that the exact equivalence between isometry classes and colouring classes of compact hyperbolic $3$--polytopes holds only for small covers \cite[Theorem 3.13]{Vesnin}. 

\begin{table}[h!]
\vspace{0.25in}
\begin{tabular}{ |c|c|c| }
  \hline
  Colouring vector & $H_1(\mathcal{M}_\lambda,\mathbb{Z})$ & $\mathrm{Sym}_\lambda(\mathcal{P})$ \\ 
  \hline
  (1, 2, 4, 12, 10, 15, 9, 15, 7, 1, 4, 2) & $(\mathbb{Z}/2\mathbb{Z})^4 \times (\mathbb{Z}/4\mathbb{Z})^6$ & trivial \\
  (1, 2, 4, 12, 2, 15, 1, 7, 7, 9, 4, 2) & $(\mathbb{Z}/2\mathbb{Z})^8 \times (\mathbb{Z}/4\mathbb{Z})^4$ & $\mathbb{Z}/3\mathbb{Z}$ \\
  (1, 2, 4, 4, 10, 15, 1, 7, 7, 9, 4, 2) & $(\mathbb{Z}/2\mathbb{Z})^8 \times (\mathbb{Z}/4\mathbb{Z})^4$ & trivial \\
  (1, 2, 4, 12, 10, 15, 1, 15, 7, 9, 4, 2) & $(\mathbb{Z}/2\mathbb{Z})^4 \times (\mathbb{Z}/4\mathbb{Z})^6$ & $\mathbb{Z}/2\mathbb{Z}$ \\
  (1, 2, 4, 12, 10, 15, 1, 7, 15, 9, 4, 2) & $(\mathbb{Z}/2\mathbb{Z})^4 \times (\mathbb{Z}/4\mathbb{Z})^6$ & trivial \\
  \textcolor{blue}{(1, 2, 4, 4, 10, 7, 9, 15, 15, 9, 4, 2)} & \textcolor{blue}{$(\mathbb{Z}/2\mathbb{Z})^8 \times (\mathbb{Z}/4\mathbb{Z})^4$} & \textcolor{blue}{$\mathbb{Z}/3\mathbb{Z}$} \\
  (1, 2, 4, 12, 10, 7, 1, 15, 7, 9, 12, 2) & $(\mathbb{Z}/2\mathbb{Z})^4 \times (\mathbb{Z}/4\mathbb{Z})^6$ & $(\mathbb{Z}/2\mathbb{Z})^2$ \\
  \hline
\end{tabular}
\vspace{0.25in}
\caption{Extensions of $\lambda$ that produced rational homology spheres, as described in Remark~\ref{rem:final_remark}: highlighted in blue is the penultimate entry that corresponds to $\mu$.}
\label{im:tables}
\end{table}

In Table~\ref{im:tables}, we provide a representative of each colouring class, together with its first integral homology group and coloured symmetry group (cf. \cite[Section 2.3]{KS?} for more information on coloured symmetries). The volume of each $\mathcal{M}_\lambda$ in Table~\ref{im:tables} is $16 \cdot \mathrm{Vol}\,\mathcal{D} \approx 68.89936 \ldots$

Each colouring is represented by a colouring vector $v = (c_i)^{11}_{i=0} \in \mathbb{Z}^{12}$ that assigns the colour $c_i$ to the facet $F_i$ of the right-angled dodecahedron in Figure~\ref{im:dodecahedron}. The colour $c_i$ is given in the binary notation: if $c_i = (x, y, z, t) \in (\mathbb{Z}/2\mathbb{Z})^4$, then we use the map $c_i \mapsto x + 2y + 4z + 8t$. The colouring $\mu$ in the proof of Theorem~\ref{thm:arithmetic-QHS-that-bounds} is equivalent to the penultimate entry (highlighted in blue) of Table~\ref{im:tables}. For further details, we refer the reader to the auxiliary SageMath code available on GitHub \cite{github}. 
\end{rem}

\section{Magma computations}
\label{magma}

\subsection{Magma calculations for \S \ref{bound_compact}}
\label{magma_bound_compact}
Referring to the Magma \cite{Mag} code below, {\tt{g}} denotes the group $\Gamma$, and {\tt{K}} = {\tt{K1}} denotes the group $\Gamma_1$.

\begin{verbatim}
> g<x,y,z>:=Group<x,y,z|x^2,y^2,z^3,(y*z)^3,(z*x)^5,(x*y)^2>;
> P:=PSL(2,11);
> H := Homomorphisms(g, P: Limit := 2);
> print H;                                                                                            
[
Homomorphism of GrpFP: g into GrpPerm: P, Degree 12, Order 2^2 * 3 * 5 * 11 
induced by
        x |--> (1, 9)(2, 12)(3, 6)(4, 7)(5, 11)(8, 10)
        y |--> (1, 5)(2, 7)(3, 10)(4, 12)(6, 8)(9, 11)
        z |--> (1, 8, 2)(3, 4, 7)(5, 12, 11)(6, 9, 10),
Homomorphism of GrpFP: g into GrpPerm: P, Degree 12, Order 2^2 * 3 * 5 * 11 
induced by
        x |--> (1, 12)(2, 10)(3, 7)(4, 5)(6, 9)(8, 11)
        y |--> (1, 5)(2, 7)(3, 10)(4, 12)(6, 8)(9, 11)
        z |--> (1, 8, 2)(3, 4, 7)(5, 12, 11)(6, 9, 10)
]
> imgs:=[P!(1, 9)(2, 12)(3, 6)(4, 7)(5, 11)(8, 10),
P!(1, 5)(2, 7)(3, 10)(4, 12)(6, 8)(9, 11),
P!(1, 8,2)(3, 4, 7)(5, 12, 11)(6, 9, 10)];
> e := hom< g->P | imgs >; 
e(g) eq P;
true
> K:=Kernel(e);
> print AbelianQuotientInvariants(K);
[ 2, 2, 2, 2, 2, 2, 2, 22, 22, 22 ]
> K1:=Rewrite(g,K);
> IsPowerful := function (G)
function>    return DerivedGroup(G) subset Agemo (G, 1);
function> end function;
> H,A,B:=pQuotient(K1,11,2:Print:=1);

Lower exponent-11 central series for K1

Group: K1 to lower exponent-11 central class 1 has order 11^3

Group: K1 to lower exponent-11 central class 2 has order 11^6
> IsPowerful(H);
true
> l:=LowIndexSubgroups(K1,<2,2>);
> print #l;
1023
> M:=[x: x in l | not (0 in AbelianQuotientInvariants (x))];
> print #M;
363
> print AbelianQuotientInvariants(M[1]);
[ 2, 2, 2, 2, 2, 2, 2, 2, 4, 4, 44, 132, 132 ]
> M1:=Rewrite(K1,M[1]);
> HH,AA,BB:=pQuotient(M1,11,2:Print:=1);

Lower exponent-11 central series for M1

Group: M1 to lower exponent-11 central class 1 has order 11^3

Group: M1 to lower exponent-11 central class 2 has order 11^6
> IsPowerful(HH);
true
> M3:=Rewrite(K1,M[3]);                 
> HH,AA,BB:=pQuotient(M3,11,2:Print:=1);

Lower exponent-11 central series for M3

Group: M3 to lower exponent-11 central class 1 has order 11^3

Group: M3 to lower exponent-11 central class 2 has order 11^6
> IsPowerful(HH);                       
true
\end{verbatim}

\subsection{Magma calculations for \S \ref{ex:CD_BE}}
\label{magma_CD_BE}
Referring to the routine below, {\tt{g}} is the group $\Gamma_1$, and {\tt{K}} = {\tt{K1}} is the group $\Gamma_2$.

\medskip

\begin{verbatim}
g<a,b,c,d>:=Group<a,b,c,d|d^3,a*c*d*c*b^2*c*a*d^-1*c^-1,
a*c*b^2*d^-1*c^-1*a^-1*b^-1*d*b^-1,a*d^-1*a^-1*c^-1*b^-1*d*b*c,(b^2*d^-1)^3,
b*d^-1*b*c*a^-1*c*d*a^-1>;
> print AbelianQuotientInvariants(g);
[ 2, 6, 12 ]
H,A,B:=pQuotient(g,3,1:Print:=1);

Lower exponent-3 central series for g

Group: g to lower exponent-3 central class 1 has order 3^2
> K:=Kernel(A);
> print AbelianQuotientInvariants(K);
[ 6, 6, 36 ]
> K1:=Rewrite(g,K);
> l:=LowIndexSubgroups(K1,<2,2>); 
> print AbelianQuotientInvariants(l[1]);
[ 3, 3, 3, 3, 6, 6, 6, 6, 18 ]
> print AbelianQuotientInvariants(l[2]);
[ 3, 3, 3, 3, 18, 18, 18, 0 ]
> print AbelianQuotientInvariants(l[3]);
[ 3, 3, 3, 3, 36, 36, 0 ]
> print AbelianQuotientInvariants(l[4]);
[ 10, 30, 60, 180 ]
> print AbelianQuotientInvariants(l[5]);
[ 3, 3, 3, 3, 18, 18, 18, 0 ]
> print AbelianQuotientInvariants(l[6]);
[ 3, 3, 3, 3, 9, 9, 18, 0, 0 ]
> print AbelianQuotientInvariants(l[7]);
[ 10, 30, 60, 180 ]
> L4:=Rewrite(K1,l[4]);
> H,A,B:=pQuotient(L4,3,2:Print:=1);  

Lower exponent-3 central series for L4

Group: L4 to lower exponent-3 central class 1 has order 3^3

Group: L4 to lower exponent-3 central class 2 has order 3^6
> IsPowerful := function (G)
function>    return DerivedGroup(G) subset Agemo (G, 1);
function> end function;
> IsPowerful(H);
true
\end{verbatim}

\end{document}